\newtheorem{theorem}{Theorem}
\newtheorem*{question}{Question}
\newtheorem{proposition}{Proposition}
\theoremstyle{definition}
\newtheorem*{definition}{Definition}
\newtheorem{lemma}{Lemma}
\newcommand{\inte}{\mathrm{int}\,}
\newcommand{\0}{\mathbf{0}}
\newcommand{\R}{\mathbb{R}}
\newcommand{\eps}{\varepsilon}
\newcommand{\K}{\mathcal{K}}
\newcommand{\dist}{\mathrm{dist}}
\newcommand{\cl}{\mathrm{cl}}
\title{Covering spiky annuli by planks}
\author[Ambrus, Huddell, Lai, Quirk, Williams]{Gergely Ambrus, Julian Huddell, Maggie Lai, Matthew Quirk, Elias Williams}
\thanks{
Research of G. A. was partially supported by the ERC Advanced Grant "GeoScape" no.  882971, by Hungarian National Research (NKFIH) grants no. 147145, 147544,  and 150151, which has been implemented with the support provided by the Ministry of Culture and Innovation of Hungary from the National Research, Development and Innovation Fund, financed under the ADVANCED-24 funding scheme.
Research was supported by project no. TKP2021-NVA-09, which has been implemented with the support provided by the
Ministry of Innovation and Technology of Hungary from the National
Research, Development and Innovation Fund, financed under the
TKP2021-NVA funding scheme. This work was supported by the Ministry of Innovation and Technology NRDI Office within the framework of the Artificial Intelligence National Laboratory (RRF-2.3.1-21-2022-00004). 
}
\subjclass[2020]{52A40, 52C15, 52C17}
\keywords{plank problem, plank coverings, spiky convex bodies,annuli of convex bodies.}
\begin{document}

\begin{abstract}
Answering Tarski's plank problem, Bang showed in 1951 that it is impossible to cover a convex body $K \subset \mathbb{R}^d$ with $d \geq 1$ by planks whose total width is less than the minimal width $w(K)$ of $K$. In 2003, A. Bezdek asked whether the same statement holds if one is required to cover only the annulus obtained from $K$ by removing a homothetic copy contained within. He showed that if $K$ is the unit square, then saving width in a plank covering is not possible, provided that the homothety factor is sufficiently small. White and Wisewell in 2006 characterized polygons that possess this property. 

We generalize the constructive part of their classification to spiky convex bodies: a body $K$ is spiky at a boundary point $x$ with supporting hyperplane $H$ and corresponding outer normal $u$, if both $K$ and its tangent cone at $x$ intersect $H$ only at $x$.
 We show  that if $K$ is a convex disc or a convex body in 3-space that is spiky in a minimal width direction, then for every $\eps \in (0,1)$ it is possible to cut a homothetic copy $\eps K$ from the interior of $K$ so that the remaining annulus can be covered by planks whose total width is strictly less than $w(K)$.
\end{abstract}

\maketitle

\section{Introduction}
In 1932, Tarski \cite{Tarski} conjectured that if a convex body $K \subset \R^d$  is covered by a finite number of planks, then the sum of their widths is not less than the minimal width of $K$. This is now known as Tarski's plank problem. 

Henceforth, a \textit{plank} is the closed region of $\mathbb{R}^d$ between two parallel hyperplanes, whose distance apart is the \textit{width} of the plank. The family $\mathcal{K}^d$ of {\em convex bodies} consists of all convex, compact sets in $\R^d$ with nonempty interior, that is,   $\inte K \neq \emptyset$. The \textit{ minimal width} $w(K)$ of $K \in \mathcal{K}^d$ is the smallest possible width of a single plank that covers $K$, and such a plank is called a \textit{ minimal width plank} of $K$. A chord between two points of $K$ is called a \textit{minimal width chord} if it is perpendicular to a minimal width plank and has length $w(K)$. The existence of such a chord is a well-known fact\footnote{Considering a minimal width plank $P$ bounded by hyperplanes $H_1$ and $H_2$, one readily sees that $H_1 \cap K$ and the orthogonal projection of $H_2 \cap K$ onto $H_1$ must intersect, as otherwise taking a $(d-2)$-dimensional flat $L$ which separates them, one could rotate and shrink $P$ about $L$  so as to reach a plank of smaller width containing $K$. }.

In 1951, Bang \cite{Bang} proved Tarski's conjecture and, at the same time, formulated a natural strengthening of it: the affine plank problem. To date, this latter conjecture has only been proved for symmetric convex bodies \cite{ball1991plank}. The crux of Bang's proof is, given a system of planks $\mathcal{P}$, the construction of a discrete point set, the so-called {\em Bang system}, whose elements cannot be simultaneously covered by the interiors of members of $\mathcal{P}$.

In 2003, A. Bezdek \cite{Bezdek} proposed the following variant of the plank problem:
\begin{question}[Plank problem for annuli]
    Let $K \in \mathcal{K}^d$ with $0 \in \inte K$. Is it true that for sufficiently small $\eps \in (0,1)$, the annulus $K \setminus \varepsilon K$ cannot be covered by a finite system of planks whose total width is less than $w(K)$?
\end{question}

Note that the criterion $0 \in \inte K$ guarantees that $\varepsilon K$ is a homothetic copy of $K$ that is contained in $\inte K$.

The most interesting case of the above question is obtained by setting $K$ to be the unit disc in the plane centered at the origin. Then $\eps K$ is a tiny concentric disc, and it is natural to expect that covering the resulting circular annulus still requires total width 2. Although the question for circular discs is still entirely open, there has been some progress for other choices of $K$. Our short note continues this line of results. 

A. Bezdek showed~\cite{Bezdek} that the annulus plank problem is answered in the affirmative for squares (with $\eps \leq 1 - 1/\sqrt{2}$) and for polygons whose inradius is exactly half of their minimal width. That is, he showed that these polygons, after removal of a small homothetic copy, still cannot be covered by planks whose total width is less than the minimal width of the polygon. His argument uses a clever alteration of the Bang point system. By essentially the same reasoning, in 2010 Smurov, Bogataya, and Bogatyi \cite{Smurov} extended Bezdek's theorem for squares to unit $d$-dimensional cubes. In 2006, White and Wisewell \cite{White-Wisewell} gave a complete characterization of planar polygons for which the annulus plank problem has an affirmative answer:
\begin{theorem}[White and Wisewell, 2006] \label{ww}
    Let $P$ be a convex polygon.
    \begin{itemize}
        \item If there is a minimal width chord of $P$ that meets a vertex of $P$ and divides the angle at that vertex into two acute angles, then for every $\varepsilon >0$ an $\varepsilon$-scaled copy of $P$ can be removed from $\inte P$ so that the resulting annulus can be covered by finitely many planks of total width strictly less than $w(P)$.
        \item If there is no such minimal width chord, then the removal of any set of sufficiently small diameter from $\inte P$ yields an annulus that cannot be covered by planks whose total width is less than $w(P)$.
    \end{itemize}
\end{theorem}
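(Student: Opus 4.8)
The two halves call for different treatments --- a direct covering construction for the first, a Bang-type impossibility argument for the second --- and I would put the weight on the first, as that is the statement generalized in the sequel.

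\emph{The constructive half.} Normalize coordinates so that the minimal width plank orthogonal to the distinguished chord is $\{0 \le y \le w\}$ with $w = w(P)$, the chord runs from the vertex $v = (0,0)$ up to $(0,w) \in \partial P$, and the two edges of $P$ at $v$ lie on the lines $\ell^+ \colon x = y\tan\alpha$ and $\ell^- \colon x = -y\tan\beta$; the hypothesis is exactly that $\alpha,\beta \in (0,\tfrac{\pi}{2})$. Two elementary observations drive the argument. First, $\ell^+$ and $\ell^-$ are global supporting lines of $P$, so $P$ is wedged between them: writing $h^+,h^->0$ for the heights of the first vertices of $P$ on the two boundary arcs leaving $v$, the cross-section of $P$ at any height $y \le \min(h^+,h^-)$ is exactly the segment $[-y\tan\beta,\,y\tan\alpha]$. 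Second, $\eps P \subseteq P$ (since $v \in P$ and $P$ is convex), and $\eps P$ touches $\partial P$ only along the two edges at $v$; because $\ell^\pm$ support $P$ globally, a straight upward translation peels $\eps P$ off those edges into $\inte P$, so $\eps P' := \eps P + (0,\eta) \subseteq \inte P$ for all sufficiently small $\eta>0$ --- and $\eps P'$ is a genuine $\eps$-homothet of $P$, centered at $(0,\tfrac{\eta}{1-\eps})$ on the chord.

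Given $\eps$, the plan is to set $\delta := \eps\min(h^+,h^-)$, remove $\eps P' = \eps P + (0,\eta)$ with $\eta>0$ tiny, and cover $P \setminus \eps P'$ by four planks: the bulk plank $\{\delta \le y \le w\}$ of width $w-\delta$, which absorbs everything above height $\delta$; the plank $\{0 \le y \le \eta\}$ of width $\eta$, which absorbs the slab below $\eta$ (where $\eps P'$ is absent); and, inside the slab $\eta \le y \le \delta$, two thin strips. The reason for $\delta = \eps\min(h^+,h^-)$ is that then, for $y \le \delta$, \emph{both} $P$ and the cross-section of $\eps P'$ still coincide with their cones at $v$ --- the cross-section of $\eps P'$ at height $y$ is the one of $P$ at height $(y-\eta)/\eps < \min(h^+,h^-)$ rescaled by $\eps$, hence equals $[-(y-\eta)\tan\beta,\,(y-\eta)\tan\alpha]$ --- so inside this slab $P \setminus \eps P'$ is \emph{exactly} the union of the region between $\ell^+$ and its translate $x=(y-\eta)\tan\alpha$ and the mirror region between $\ell^-$ and $x=-(y-\eta)\tan\beta$. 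Each of these lies between two parallel lines a distance $\eta\sin\alpha$, respectively $\eta\sin\beta$, apart, hence is covered by a plank of that width. The total width is $w - \delta + \eta(1+\sin\alpha+\sin\beta)$, which is $<w$ as soon as $\eta < \delta/(1+\sin\alpha+\sin\beta)$. I expect the only points needing real care to be (a) the interior-containment of $\eps P'$ and (b) keeping the slab $\eta \le y \le \delta$ thin enough that the leftover is two honest straight strips rather than a crescent tracking the (possibly many-edged) boundary of $P$; both are handled by taking $\delta$ proportional to $\eps$ and then letting $\eta \to 0$, and the saving, though only of order $\eps$, is strictly positive.

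\emph{The impossibility half.} For the converse I would argue from Bang's theorem. By monotonicity it is enough to find $\rho>0$ such that, for every $x_0$, the set $P \setminus B(x_0,\rho)$ cannot be covered by planks of total width $<w$: for if some $X \subset \inte P$ of diameter at most $\rho$ could be avoided with total width $<w$, the same planks would cover $P \setminus B(x_0,\rho)$ for $x_0 \in X$. Now if planks of total width $<w$ cover $P \setminus B(x_0,\rho)$, then by Bang's theorem they fail to cover $P$, so the uncovered part of $P$ is nonempty of diameter $\le 2\rho$. Everything thus reduces to a quantitative sharpening of Bang's theorem for this particular $P$: under any plank system of total width $<w(P)$, the uncovered part of $P$ has diameter bounded below by a positive constant depending only on $P$ --- one then takes $\rho$ below half that constant. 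Bang's proof already extracts an uncovered point from a minimal width chord; the extra input is that, \emph{precisely because no minimal width chord of $P$ splits a vertex into two acute angles}, this point is not rigidly pinned down --- one can transport the Bang configuration along a one-parameter family of parallel minimal width chords, or use that near the endpoint of a minimal width chord the boundary of $P$ is not a spike and so leaves the local supporting lines some slack --- which forces the uncovered set to have diameter bounded below. This lower bound on the hole is the heart of the converse, and the step I would expect to be the main obstacle.
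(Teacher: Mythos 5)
The paper does not prove Theorem~\ref{ww}; it is quoted from White and Wisewell~\cite{White-Wisewell}, and the paper's own contribution, Theorem~\ref{spikythm}, generalizes only the first (constructive) bullet. So the only meaningful comparison is between your constructive half and the proof of Theorem~\ref{spikythm}.

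Your constructive argument is correct, and in spirit it is the same as the paper's proof of Theorem~\ref{spikythm} restricted to $d=2$: normalize so the distinguished vertex sits at the origin with the minimal width plank horizontal, take one big horizontal plank that absorbs everything above a small height, and cover the thin wedge near the spike with planks whose boundary lines are parallel to the two edges of the tangent cone. The difference is in the bookkeeping. The paper first covers $K\setminus\inte\eps K$ with $\eps K$ placed so that it touches $\partial K$ at the vertex --- for a polygon the ``planks'' parallel to the edges then degenerate to lines of width $\delta_t=0$ --- and only afterwards shifts $\eps K$ upward by $\kappa$ and inflates every plank by $2\kappa$. You instead place the homothet $\eps P+(0,\eta)$ strictly inside $\inte P$ from the outset and cover with genuine planks of explicit widths $\eta\sin\alpha$, $\eta\sin\beta$, which removes the need for the final perturbation step. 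For polygons your route is actually cleaner, because the body coincides exactly with its tangent cone near the vertex and there is nothing to approximate --- the paper needs Lemma~\ref{lem-1} precisely because a general spiky body is only asymptotically conical. A small cosmetic remark: your horizontal plank $\{0\le y\le\eta\}$ is redundant, since for $y\le\eta$ the two slanted strips already overlap and between them cover the entire cross-section of $P$, so the total width is in fact $w-\delta+\eta(\sin\alpha+\sin\beta)$.

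Your impossibility half is only a sketch, and you acknowledge as much. The reduction to a quantitative Bang statement (``under any plank system of total width $<w(P)$ the uncovered set has diameter bounded below by a constant depending on $P$'') is logically sound, but the claim itself is asserted rather than proved, and proving it is the entire substance of that direction --- it is also clearly false without the hypothesis, since for a spiky polygon shaving the bottom of the minimal width plank leaves only a tiny uncovered triangle at the vertex, so the non-spiky hypothesis must enter quantitatively into the argument, not just as motivation. The paper gives no help here: it neither reproduces White and Wisewell's proof of the second bullet nor generalizes it, and its Further Remarks explicitly leave that direction open. As written, your proof of the second bullet has a genuine gap.
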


For the first part of the result, White and Wisewell showed that if a convex polygon has a minimal width direction in which it is ``pointy'', then an $\varepsilon$-scaled copy of $K$ can be placed very close to that vertex along the minimal width chord so that width can be saved in a plank covering of the resulting annulus. The spikiness property, that we formally define below, extends pointiness beyond polygons.

First, let us introduce some relevant terms.  We will work in $\R^d$ with $d \geq 2$. For a set $A$, we denote by $\inte A$, $\cl A$, $\partial A$, and $A^c$  its interior, closure, boundary, and complement, respectively.
Given a nonzero vector $v\in \R^d$ and some $t \in \R$, we introduce the hyperplane $H(v,t) = \{ x\in \mathbb{R}^d \mid \langle x, v \rangle = t\}$. For a convex body $K \in \mathcal{K}^d$, its support function is defined on the unit sphere $S^{d-1}$ by $h_K(u) = \max_{x\in K}  \langle x, u\rangle$. 
For $u \in S^{d-1}$, the supporting hyperplane of $K\in \mathcal{K}^d$ with outer normal $u$ is $H_K(u) = H(u, h_K(u))$. For a boundary point $x\in \partial K$,  the tangent cone of $K$ at $x$, denoted by $T_K(x)$, is \[T_K(x) = \cl \{ x + \alpha(y-x) \mid y\in K, \alpha\geq 0\}.\]

In the following, we provide a formal definition for the spiky property -- see Figure~\ref{fig:spikiness}.
\begin{definition}
    $K \in \mathcal{K}^d$ is \textit{spiky} in direction $u \in S^{d-1}$ if $K \cap H_K(u)$ is a single point $x \in \partial K$ at which $T_K(x) \cap H_K(u) = \{x\}$. 
\end{definition}

%TODO: LEFT - general spiky body with everything labeled, RIGHT - show semicircle with non-spiky and spiky directions (along the straight edge, along the curve, at spike)

\begin{figure}[h]
\centering
\begin{subfigure}{.5\textwidth}
  \centering
  \includegraphics[width=0.75\linewidth]{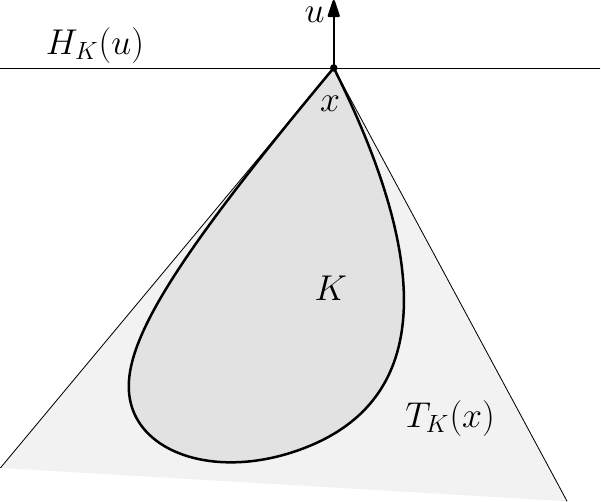}
  \caption{$K$ is spiky in direction $u$}
  \label{fig:spiky}
\end{subfigure}%
\begin{subfigure}{.5\textwidth}
    \centering
    \includegraphics[width=0.85\linewidth]{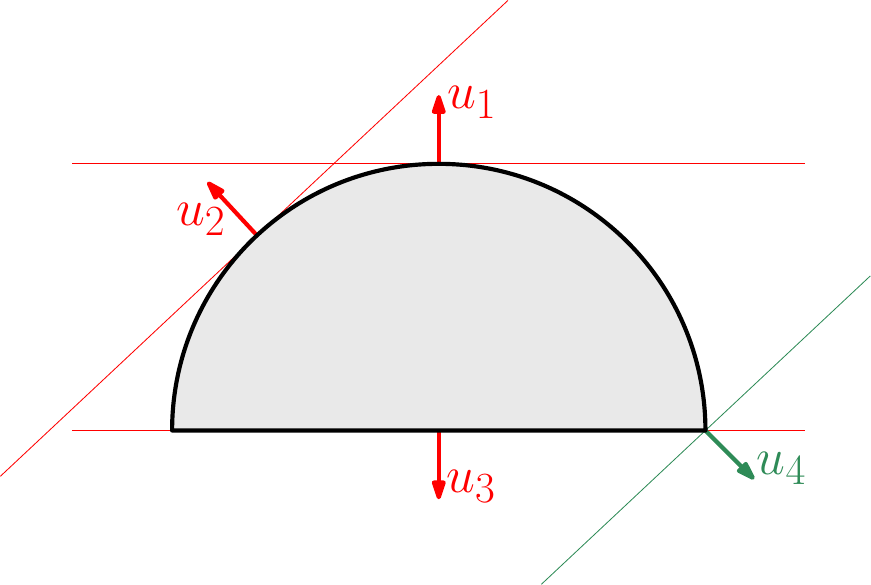}
    \caption{The semicircle is spiky in direction $u_4$, but not in directions $u_1, u_2, u_3$}
    \label{fig:semicircle}
\end{subfigure}
\caption{Spikiness}
\label{fig:spikiness}
\end{figure}

Our goal is to extend the first part of Theorem~\ref{ww} to spiky convex bodies in both $\mathbb{R}^2$ and $\mathbb{R}^3$ by showing that a well-placed homothetic copy can be cut from these in order to economize a plank covering. We refer to such an annulus as spiky.

\begin{theorem}\label{spikythm}
    If $K \in \mathcal{K}^d$, $d=2,3$, has a minimal width direction in which it is spiky, then for all $\varepsilon \in (0, 1)$ there exists $y\in \mathbb{R}^d$ with $\varepsilon K + y\subset \inte K$ such that $K\setminus \{\varepsilon K + y\}$ can be covered by planks of total width strictly less than $w(K)$.
\end{theorem}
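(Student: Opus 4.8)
The plan is to reduce to a clean "standard position" and then exhibit an explicit economical plank covering built around the spike. Place the convex body $K$ so that a minimal width direction in which it is spiky is the $x_d$-axis, with the spike vertex at the origin $\mathbf{0} \in \partial K$, and the opposite supporting hyperplane $H(e_d, -w(K))$ touching $K$ from below; thus $K$ lies in the slab $-w(K) \le x_d \le 0$, and near $\mathbf{0}$ the body is contained in the tangent cone $T_K(\mathbf{0})$, which by spikiness meets the plane $x_d = 0$ only at $\mathbf{0}$. The point of spikiness is that $T_K(\mathbf{0})$ makes a strictly positive angle with the horizontal, so there is a constant $c>0$ such that every point $x \in K$ with $x_d \ge -\delta$ satisfies $\norm{(x_1,\dots,x_{d-1})} \le c\,(-x_d)$ for small $\delta$; i.e. the top portion of $K$ is contained in a thin cone.

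**Next I would construct the covering.** Choose a small height $\eta>0$. The "collar" of $K$ between $x_d = -\eta$ and $x_d = 0$ is, by the cone estimate, contained in a ball (or a box) of diameter $O(\eta)$, hence can be covered by a single horizontal plank $\{-\eta' \le x_d \le 0\}$ of width $\eta'$ — or more efficiently, since it is $d$-dimensionally tiny, by a plank of width $O(\eta)$ in \emph{any} direction. Now place the removed homothet $\eps K + y$ so that its own spike vertex sits at height $x_d = -\eta$ pointing upward along the axis: concretely take $y$ so that $\eps K + y$ occupies roughly $-\eta - \eps w(K) \le x_d \le -\eta$ while staying inside $K$ (possible for $\eta$ small because the cone estimate forces $K$ to be wide enough there once $\eta$ is chosen, then $\eps$ is fixed, then $\eta$ is shrunk further if needed — care with the order of quantifiers is required since $\eps$ is given first, so actually: fix $\eps$, then the scaled copy $\eps K$ has some width $\eps w(K)$ in the vertical direction and fits in $\inte K$ provided it is translated to a region where $K$'s horizontal cross-section exceeds that of $\eps K$; a short compactness/continuity argument gives an admissible band of heights). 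The annulus $K \setminus (\eps K + y)$ then consists of: (i) the collar above $x_d = -\eta$, covered by one thin plank of width $\le \eta'$; and (ii) everything below, which must be covered by horizontal planks. For part (ii), the vertical extent to be covered is the slab $-w(K) \le x_d \le -\eta$ \emph{minus} the vertical slab occupied by $\eps K + y$; removing that sub-slab of height $\eps w(K)$ leaves two slabs whose heights sum to $w(K) - \eta - \eps w(K)$. Covering part (ii) by two horizontal planks of exactly those heights, plus the one thin plank for part (i), gives total width $(w(K) - \eta - \eps w(K)) + \eta' < w(K)$ once $\eta' < \eta + \eps w(K)$, which holds for $\eta'$ small. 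This saves width $\approx \eps\,w(K)$.

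**The technical heart** is making step (i)–(ii) fit together honestly: one must verify that $\eps K + y$ genuinely separates the lower part of $K$ into the two claimed sub-slabs (it need not — $\eps K+y$ is much thinner horizontally than $K$, so horizontal planks at the heights spanned by $\eps K + y$ are \emph{not} automatically free). This is the real obstacle, and the fix (following White–Wisewell's polygon argument) is to push $\eps K + y$ up close to the spike: place it so its top is just below $x_d = -\eta$ and exploit that, in the collar region, $K$ is so thin that a \emph{single} small plank covers the entire part of the annulus lying above the top of $\eps K + y$ — including the horizontal "sides" of $K$ alongside $\eps K + y$ that would otherwise need full-width horizontal planks. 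Concretely: let $\eps K + y$ span heights $[a - \eps w(K), a]$ with $a$ slightly below $0$; cover $\{x_d \le a - \eps w(K)\} \cap K$ by one horizontal plank of width $w(K) - \eps w(K) - (\text{dist from } a \text{ to } 0)$... — the bookkeeping must be arranged so the "wasted" overlap is $O(\eta)$. Then everything with $x_d \ge a - \eps w(K)$, i.e. the collar plus the thin sides flanking $\eps K+y$, lies inside $K \cap \{x_d \ge a - \eps w(K)\}$, which by the cone estimate has diameter $O(\eps w(K) + \eta)$ — \emph{but} that is not $o(1)$, so one instead iterates or uses several thin planks perpendicular to a horizontal direction to mop up this cone region at total cost $O(\eta)$ after first shaving off most of it with horizontal planks. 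I would formalize this by: (a) covering $K \cap \{x_d \le -t\}$ for a well-chosen $t$ slightly above the bottom of $\eps K+y$ with horizontal planks totalling $w(K) - t$; (b) observing $K \cap \{x_d \ge -t\}$ is inside a cone of diameter $ct$, and since $\eps K + y$ is removed from its interior, covering the \emph{annular} part of this cone region with planks of total width $< t$ is possible by scaling White–Wisewell's planar construction (for $d=2$) or a direct cone argument (for $d=3$, where the cross-section is a small convex disc and one reuses the $d=2$ case slice-wise, or simply notes three mutually orthogonal thin planks suffice when $ct$ is small relative to the geometry). Summing (a) and (b) gives total width $< w(K)$.

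**In short**, the strategy is: standardize the spike to the vertical axis; use spikiness to trap the top of $K$ in a thin cone; stack horizontal planks of near-total width on the bulk of $K$ below a cut just above the removed homothet; and exploit the thinness of the cone to cover the remaining annular sliver with a controllably small amount of extra width — the delicate point being the quantifier order ($\eps$ is given, then we choose $y$ and the cut heights) and verifying the thin cone covering costs strictly less than the width saved by the removed homothet. The $d=3$ case differs from $d=2$ only in that the cone region is genuinely three-dimensional; there it is cleanest to cover it by planks parallel to the axis whose cross-sections handle the thin disc, or to invoke the $d=2$ argument on vertical slices, so no essentially new idea is needed beyond keeping the constants under control.
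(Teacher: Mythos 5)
Your overall architecture matches the paper's: put $K$ in a standard position with the spike on the vertical axis, cover the bulk of $K$ by a single horizontal plank of width $w(K)-t$, and then cover the small cone region near the spike (where the annulus $K\setminus(\varepsilon K + y)$ is a thin shell hugging the boundary of the tangent cone) by additional planks of total width less than $t$. The placement of $\varepsilon K + y$ flush against the spike is also correct in spirit; the paper does this more cleanly by first covering $K\setminus\inte(\varepsilon K)$ (where $\varepsilon K$ shares the spike vertex with $K$) and then inflating each plank by $2\kappa$ to absorb a small translation $\kappa e_d$, which sidesteps the quantifier juggling you worry about.

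The genuine gap is in covering the cone region when $d=3$. At height $t$ the cross-section $(T_K\setminus\inte\varepsilon K)\cap H_t$ is a planar \emph{annulus}: a convex disc of diameter $\Theta(t)$ with a concentric disc removed, leaving a ring of metric width $\delta_t$, and the only thing spikiness buys you (this is the paper's Lemma~1) is that $\delta_t/t\to 0$. To win, you must cover that ring by planar planks of total width less than $t$, then cone those planks to the spike vertex. Your two proposed mechanisms fail here. ``Three mutually orthogonal thin planks'' cannot cover a thin ring of circumradius $\sim t$ without using planks of width comparable to $t$ each, giving total width $\gtrsim t$, not $\ll t$ --- the whole difficulty is that a thin ring is not a thin set. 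And ``reusing the $d=2$ case slice-wise'' does not produce planks: a union of one-dimensional coverings over a family of slices is not a union of planks in $\R^2$, let alone $\R^3$. What is actually needed is the paper's Lemma~2, a quantitative cap-covering theorem: a convex disc with perimeter $\rho$ has its metric annulus of width $\delta$ covered by at most $\sqrt{2\pi\rho/\delta}$ planks of width $2\delta$, hence total width $O(\sqrt{\rho\delta})$. Applied with $\rho\sim t$ and $\delta=\delta_t=o(t)$, this gives total width $\sqrt{8\pi t\rho\,\delta_t}<t$ once $t$ is small. Without a statement and proof of something like Lemma~2, your $d=3$ argument does not close. (For $d=2$ the cross-section is just two short intervals, so two thin planks through the vertex suffice and your sketch is fine; that case does not expose the gap.)
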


In particular, the result applies to the Reuleaux triangle and the Meissner body with properly selected minimal width directions, but it does not hold for smooth convex bodies. 

We prove the theorem in Section~\ref{sec:proof} using a quantitative covering result on planar annuli given in Section~\ref{sec:planar}.

\section{Technical lemmas}  \label{sec:planar}

Given a spiky convex body, we assume without loss of generality that it is in \textit{standard position} meaning that $K$ is spiky in direction $-e_d$, the minimal width of $K$ is $1$, and
\[K \cap H_K(-e_d) = \{\0 \},\] see Figure~\ref{fig:standardpos}. In addition, we will write $T_K \coloneqq T_K(\0)$. Let $ H_t \coloneqq H(e_d,t)$ be the horizontal hyperplane at height $t$. Note that $H_0 = H_K(-e_d)$ and $H_1=H_K(e_d)$ are supporting hyperplanes of~$K$.

\begin{figure}[h]
    \centering
    \includegraphics[width=0.65\linewidth]{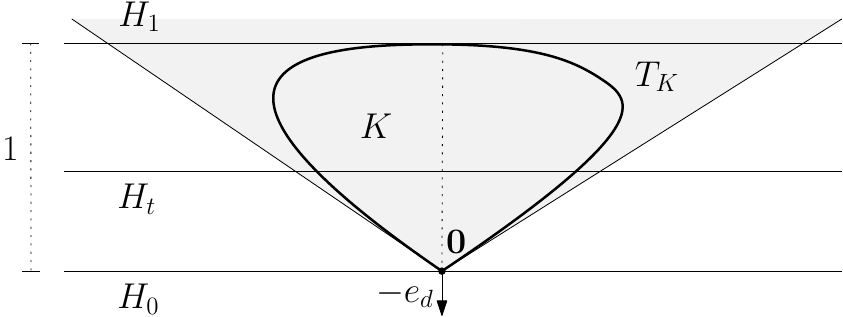}
    \caption{The spiky convex body $K$ in standard position}
    \label{fig:standardpos}
\end{figure}

Now, let $x \in \R^d$ and $A \subset \R^d$. The distance of $x$ from $A$ is, as usual, 
\[
\dist(x, A) = \inf_{y \in A} |x -y|,
\]
where $|.|$ is the Euclidean norm. For $A,B$ compact nonempty sets in $\K^d$, their \textit{Hausdorff distance} $d_H(A,B)$ is defined as
\[d_H(A,B)=\max\left(\sup_{a\in A} \dist(a,B),\sup_{b\in B}\dist(b,A)\right).\]
Given some $K \in \K^d$, we introduce its {\em metric annulus} of width $\varepsilon$ by
\[
K^\varepsilon=\{x\in K\mid \dist(x,K^c)\le\varepsilon\}
\]
which is the complement of the open inner parallel body within $K$.
Note that if $K,L\in \mathcal{K}^d$ with $L \subset K$, then $K\setminus L\subset K^\varepsilon$ if and only if $d_H(\partial L, \partial K)= d_H(K\setminus L, \partial K)\le \varepsilon$.
\begin{lemma}\label{lem-1}
    Let $K\in \mathcal{K}^d$ be a spiky convex body in standard position. As $t\to 0$ along positive values, the set $\frac{1}{t} ((T_K\setminus \inte K )\cap H_t)$ converges to $ \partial T_K \cap H_1$ in the Hausdorff distance.    
\end{lemma}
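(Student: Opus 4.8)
The plan is to exploit the conical structure of $T_K$ near its apex $\0$. Since $T_K$ is a closed convex cone with apex at the origin, it is invariant under positive scaling: $\frac{1}{t} T_K = T_K$ for every $t>0$. Consequently $\frac{1}{t}(T_K \cap H_t) = T_K \cap H_1$ for all $t > 0$, so the sets $\frac{1}{t}(T_K \cap H_t)$ do not move at all — the only thing that varies with $t$ is the part we subtract, namely $\frac{1}{t}(\inte K \cap H_t)$. So the real content is to show that $\frac{1}{t}(\inte K \cap H_t)$, as a subset of the fixed compact convex set $T_K \cap H_1$, shrinks down (in an appropriate one-sided Hausdorff sense) so that what survives the subtraction converges to $\partial T_K \cap H_1$. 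I would first record that $T_K \cap H_1$ is a compact convex body in the hyperplane $H_1$: it is bounded precisely because $K$ is spiky in direction $-e_d$, i.e. $T_K \cap H_K(-e_d) = T_K \cap H_0 = \{\0\}$, so the cone $T_K$ contains no horizontal ray and hence meets each $H_t$, $t>0$, in a bounded set.

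Next I would unravel the Hausdorff-distance claim into two one-sided estimates. For the easy direction, note $K \subset T_K$ gives $(T_K \setminus \inte K)\cap H_t \supset \partial T_K \cap H_t$ (any boundary point of the cone at height $t$ is not interior to $K$), so after scaling $\frac{1}{t}((T_K\setminus \inte K)\cap H_t) \supset \partial T_K \cap H_1$; thus $\sup_{p \in \partial T_K \cap H_1}\dist\big(p,\ \tfrac1t((T_K\setminus\inte K)\cap H_t)\big) = 0$ for every $t$, and that side is trivial. The substantive direction is to show
\[
\sup_{q \,\in\, \frac1t((T_K\setminus \inte K)\cap H_t)} \dist\big(q,\ \partial T_K \cap H_1\big) \longrightarrow 0 \quad (t \to 0^+).
\]
Equivalently: every point $q \in \frac1t((T_K\setminus\inte K)\cap H_t)$ — which automatically lies in $T_K\cap H_1$ — is close to $\partial T_K$ when $t$ is small. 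The natural approach is contradiction: if not, there is $\delta>0$, a sequence $t_n \to 0^+$, and points $q_n \in \frac{1}{t_n}((T_K\setminus\inte K)\cap H_{t_n}) \subset T_K \cap H_1$ with $\dist(q_n, \partial T_K \cap H_1)\ge \delta$. By compactness of $T_K\cap H_1$, pass to a subsequence $q_n \to q_\infty$ with $q_\infty \in (T_K\cap H_1)\setminus(\partial T_K \cap H_1)$, hence $q_\infty$ lies in the relative interior of $T_K \cap H_1$, i.e. in $\inte T_K$. Rescaling back, the point $t_n q_n \in (T_K \setminus \inte K)\cap H_{t_n}$ lies on the ray from $\0$ through a point converging to $q_\infty \in \inte T_K$.

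The crux is then a local comparison between $K$ and $T_K$ near $\0$: I claim that $\inte T_K$ near the apex is contained in $\inte K$, in the precise sense that if $q_\infty \in \inte T_K$ then for all sufficiently small $s>0$ the point $s q_\infty$ (and a whole neighborhood of it at scale $s$) lies in $\inte K$. This follows from the definition of the tangent cone: $T_K = \cl\{\0 + \alpha(y - \0): y \in K,\ \alpha \ge 0\}$, so a point in $\inte T_K$ is an interior point of the cone generated by $K$; since $\0 \in \partial K$ and $K$ is convex, a small enough positive multiple of such a point, being a convex combination of $\0$ and an interior direction of $K$, lies in $\inte K$ — and this is robust to a small perturbation of $q_n$ toward $q_\infty$. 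Applying this with $s = t_n$ and $n$ large contradicts $t_n q_n \notin \inte K$. This local containment step — making precise "interior of the tangent cone near the apex sits inside the body" uniformly enough to absorb the $q_n \to q_\infty$ approximation — is the main obstacle; everything else is bookkeeping about scaling and compactness. Finally I would assemble the two one-sided bounds to conclude $d_H\big(\frac1t((T_K\setminus\inte K)\cap H_t),\ \partial T_K \cap H_1\big)\to 0$.
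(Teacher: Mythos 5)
Your proposal is correct, and it reaches the same contradiction target as the paper — a point $x \in \inte T_K \cap H_1$ whose multiples $tx$ never enter $\inte K$ — but it finishes that contradiction by a genuinely different mechanism. The paper first records that $\frac1t((T_K\setminus\inte K)\cap H_t)$ is a nested decreasing family of compacts, so the Hausdorff limit is the intersection $T$; given $x \in T\cap\inte T_K$, it separates $x$ from each increasing set $\frac1t(\inte K\cap H_t)$ by a $(d-2)$-flat in $H_1$ and passes to a limiting supporting flat of $T_K\cap H_1$ through $x$, contradicting $x\in\inte T_K$. You instead extract a convergent sequence $q_n\to q_\infty\in\inte T_K\cap H_1$ and argue radially, using that $\inte T_K$ is the cone over $\inte K$ to show $t_nq_n\in\inte K$ eventually. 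Both work; the paper's route leans on the monotone-nested-compacts fact and a flat-separation/limiting argument, while yours leans on the identity $\inte T_K=\{\alpha y: y\in\inte K,\ \alpha>0\}$ and a local ball-scaling estimate, which is arguably more hands-on.

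One point you rightly flag as the crux deserves a word more than you give it: from $q_\infty\in\inte T_K$ you need $q_\infty=\alpha y$ with $y\in\inte K$ (not merely $y\in K$), and that is not immediate from the definition $T_K=\cl\{\alpha y: y\in K,\ \alpha\ge0\}$. It does hold, by the following argument, which is worth spelling out: if the ray from $\0$ through $q_\infty$ never meets $\inte K$, then the segment from $\0$ to the point $q_\infty/\alpha_0\in K$ lies entirely in $\partial K$, hence lies in a single supporting hyperplane $H$ of $K$; as $H$ passes through $\0$, the cone $T_K$ lies in the closed halfspace bounded by $H$, and since $q_\infty\in H$ this contradicts $q_\infty\in\inte T_K$. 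With that filled in, the rest of your argument — the ball $B(y,r)\subset\inte K$ scales to $B(s\alpha y, s\alpha r)\subset\inte K$ for $s\alpha\le1$, which for large $n$ absorbs both the smallness of $t_n$ and the error $|q_n-q_\infty|$ — closes the contradiction cleanly.
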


The second tool, which forms the crux of the proof, is an economic cap covering theorem for planar convex discs in the sense that the metric annulus is to be covered by few caps (for related results, see \cite{barany}).

\begin{lemma}\label{lem-2}
    Let $K \in \mathcal{K}^2$ with perimeter $\rho$ and let $0< \delta < w(K)$. Then $K^\delta$ can be covered by at most $\sqrt{\frac{2\pi \rho}{\delta}}$ planks of width $2\delta$.
\end{lemma}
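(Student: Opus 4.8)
The plan is to cover $K^\delta$ by a controlled number of ``cap planks'' associated to an economical net of directions on the circle. First I would partition $S^1$ into $N$ equal arcs with endpoints $u_1,\dots,u_N$ (so consecutive directions differ by angle $2\pi/N$), and for each $i$ take the plank $P_i$ of width $2\delta$ bounded by the supporting line $H_K(u_i)$ and the parallel line at distance $2\delta$ inside; equivalently $P_i = \{x \in \R^2 : h_K(u_i) - 2\delta \le \langle x, u_i\rangle \le h_K(u_i)\}$. The total width of these $N$ planks is $2\delta N$, and the heart of the argument is to show that for a suitable choice of $N \approx \sqrt{2\pi\rho/\delta}$ the union $\bigcup_i P_i$ already contains the metric annulus $K^\delta$; bounding the number of planks is then immediate.

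The key geometric step is the containment $K^\delta \subset \bigcup_i (P_i \cap K)$. Take $x \in K^\delta$, so $\dist(x, K^c) \le \delta$; let $p \in \partial K$ be a nearest boundary point, $|x-p| \le \delta$, and let $u \in S^1$ be an outer normal to $K$ at $p$, so that $\langle p, u\rangle = h_K(u)$ and $\langle x, u\rangle \ge h_K(u) - \delta$. Pick the net direction $u_i$ closest to $u$, with angle $\theta \le \pi/N$ between them. Then on one hand $h_K(u_i) - \langle x, u_i\rangle = \langle p - x, u_i\rangle + (h_K(u_i) - \langle p, u_i\rangle) \ge 0$ is automatic from $h_K(u_i) \ge \langle p, u_i\rangle$; on the other hand I must bound $h_K(u_i) - \langle x, u_i\rangle$ above by $2\delta$. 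Writing $h_K(u_i) - \langle x, u_i \rangle = (h_K(u_i) - h_K(u)) + (h_K(u) - \langle x, u\rangle) + \langle x, u - u_i\rangle$, the middle term is $\le \delta$, and the remaining two terms are controlled by the modulus of continuity of the support function together with the diameter of $K$: $|h_K(u_i) - h_K(u)| \le D|u_i - u|$ and $|\langle x, u - u_i\rangle| \le D|u - u_i| \le D\theta$, where $D = \mathrm{diam}(K)$. So $h_K(u_i) - \langle x, u_i\rangle \le \delta + 2D\theta \le \delta + 2D\pi/N$, and requiring $2D\pi/N \le \delta$, i.e. $N \ge 2\pi D/\delta$, gives $x \in P_i$.

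The catch is that the crude bound $N \ge 2\pi D/\delta$ yields $2\delta N \ge 4\pi D$ planks-worth of width, which is far worse than the claimed $\sqrt{2\pi\rho/\delta}$; the diameter is the wrong quantity. The fix — and the main obstacle — is to replace the global diameter estimate by a \emph{local} one that exploits the perimeter $\rho$: rather than a uniform net, I would choose the directions $u_i$ adapted to arc length on $\partial K$, so that the piece of $\partial K$ controlling plank $P_i$ has length on the order of $\rho/N$. Concretely, the relevant quantity is not $\mathrm{diam}(K)$ but the diameter of the cap $K \setminus P_i'$ cut off near direction $u_i$, and a cap of a convex curve of perimeter $\rho$ subtending normals in an arc of angular width $2\theta$ has diameter $O(\sqrt{\rho\,\delta})$ once its height is $\le 2\delta$ — this is the standard ``cap'' estimate (as in the references to Ewald et al.\ and B\'ar\'any). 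Balancing: with $N$ directions each cap has perimeter $\approx \rho/N$, hence diameter $\approx \sqrt{\delta \rho/N}$ at height $2\delta$, and the condition from the previous paragraph becomes (roughly) $\sqrt{\delta\rho/N}\cdot(\pi/N) \lesssim \delta$, which rearranges to $N \gtrsim \sqrt{\rho/\delta}$; tracking the constants carefully gives exactly $N = \lceil \sqrt{2\pi\rho/\delta}\,\rceil$. I would therefore: (i) set up the arc-length net, (ii) prove the cap diameter bound via convexity and the isoperimetric-type inequality $(\text{chord})^2 \le (\text{arc}) \cdot (\text{height}) \cdot \text{const}$, (iii) run the containment argument of the second paragraph with the local diameter in place of $D$, and (iv) optimize $N$. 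Finally, since each $P_i$ has width exactly $2\delta < 2w(K)$ and we use $N \le \sqrt{2\pi\rho/\delta}$ of them, the covering claim follows; I would also note that $K^\delta$ need not be covered by fewer, so the estimate is of the right order.
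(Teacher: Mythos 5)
Your first, uniform-net version is clean but, as you correctly observe, the Lipschitz bound $|h_K(u)-h_K(u_i)|+|\langle x,u-u_i\rangle|\le 2D\theta$ is tied to the global diameter and cannot give the claimed $\sqrt{2\pi\rho/\delta}$ count. The trouble is that the ``fix'' you propose does not hold: the inequality $(\text{chord})^2\lesssim(\text{arc})\cdot(\text{height})$ is false for caps of convex curves. For a disc of radius $R$, a cap of height $h$ has chord $c\approx 2\sqrt{2Rh}$ and arc $s\approx c$, so $c^2/(s\,h)\approx c/h\to\infty$ as $h\to 0$. The ``global'' form you also invoke (cap of height $\le 2\delta$ has diameter $O(\sqrt{\rho\delta})$) fails too: for a thin ellipse $x^2/a^2+y^2/b^2\le 1$ with $a\gg b$, the cap at the top of height $h$ has chord $c\approx 2a\sqrt{2h/b}$, while $\rho\approx 4a$, so $c^2/(\rho h)\approx 2a/b$ is unbounded. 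In addition, once you switch to an arc-length-adapted net (so that each cap uses arc $\approx\rho/N$), you lose the bound $\theta\le\pi/N$ on the angular gap between consecutive normals, which your containment estimate also needs; for a body with long flat sides a single arc-length cell can subtend a large angle. So neither ingredient of the ``balancing'' survives scrutiny, and the containment $K^\delta\subset\bigcup_i P_i$ with only $N\approx\sqrt{\rho/\delta}$ \emph{prescribed} planks is not established.

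The paper avoids these obstacles by not fixing the directions in advance. It builds the planks greedily: starting at a boundary point $p_1$, it takes a supporting line $\ell_i$ at $p_i$, slides it inward by $\delta$, lets $p_{i+1}$ be where the shifted line re-enters $\partial K$, and shows that the arc $\widearc{p_ip_{i+1}}$ has length at least $\delta/\sin\alpha_i$, where $\alpha_i$ is the turn of the supporting line. Summing gives $\sum\delta/\sin\alpha_i\le\rho$ together with $\sum\alpha_i\le 2\pi$, and Jensen's inequality applied to the convex function $\delta/\sin x$ then yields $n<\sqrt{2\pi\rho/\delta}$; doubling each plank's width covers $K^\delta$. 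The crucial feature is that the widths of the ``caps'' adapt automatically to the local geometry (flat portions use fewer planks that turn by tiny angles), so no uniform chord/arc/height inequality is ever needed. If you want to pursue a net-based argument, you would have to let the number of directions and the angular spacing both depend on the local turning of $\partial K$, at which point you essentially rediscover the greedy construction plus Jensen.
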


\begin{proof}[Proof of Lemma \ref{lem-1}]
Note that for each $t >0$,  $\frac{1}{t} ((T_K\setminus\inte K)\cap H_t) \subset H_1$. By convexity, $\frac{1}{t} (\inte K \cap H_t)$ increases and $\frac{1}{t} ((T_K\setminus\inte K )\cap H_t)$ decreases as $t\searrow 0$, and by spikiness, each such set is bounded and therefore compact. Since a nested sequence of nonempty compact sets in $\R^d$ converges to their intersection in the Hausdorff metric (see e.g. \cite{petrunin2023space}), by monotonicity we derive that the limit exists in $H_1$:
    \[
    T:= \lim_{t \to 0} \frac 1 t \left(\left(T_K\setminus\inte K \right)\cap H_{t}\right) = \bigcap_{t > 0 } \frac 1 t  \left(\left(T_K\setminus\inte K\right)\cap H_{1/t}\right).
    \]
We clearly have $\partial T_K\cap H_1 \subset T$. Suppose for the sake of contradiction that $T \neq \partial T_K\cap H_1$. Then there exists some $x\in \inte T_K\cap H_1$ such that for all $t>0$, $tx\notin \inte K$. Thus, for each $t \in (0,1)$ one may separate $x$ and $ \frac 1 t (\inte K \cap H_t)$  in $H_1$ by a $(d-2)$-dimensional affine subspace. Since the latter are increasing as $t\searrow 0$, a standard compactness argument shows the existence of a supporting $(d-2)$-flat of $T_K \cap H_1$ through $x$, a contradiction. 
\end{proof}

\begin{proof}[Proof of Lemma \ref{lem-2}]
We first claim that it is possible to cover $\partial K$ with at most  $\sqrt{\frac{2\pi \rho}{\delta}}$ planks of width at most $\delta$. For proving this, we recursively define a set of planks $(P_i)_1^n$ with the desired properties, see Figure~\ref{fig:planks}.

To start with, let $p_1 \in \partial K$ be arbitrary. Next, for every $i \geq 1$,  take  a supporting line $\ell_{i}$ of $K$ at $p_{i}$, consider the line $\ell_{i}'$ obtained by translating $\ell_{i}$ by distance $\delta$ towards $K$, and define $P_i$ to be the plank of width $\delta$ bounded by $\ell_i$ and $\ell_i'$. Moreover, let $p_{i+1}$ be the intersection point between $\ell_{i}'$ and $\partial K$ in the counterclockwise direction from $p_{i}$. Denote by $\widearc{p_i p_{i+1}}$ the resulting arc of $ \partial K$ between these points, and by $|\widearc{p_i p_{i+1}}|$ its arclength.

We stop the process when the planks cover $\partial K$: let $n$ be the smallest index for which $\partial K \subset \bigcup_{i=1}^n P_i$. Such an $n$ always exists since  $|\widearc{p_i p_{i+1}}| \geq \delta$ for every index $i\geq 1$. 

If $p_1 \not \in P_n$, we consider the planks $P_1, \ldots, P_n$. Otherwise, we re-define $p_{n+1}:= p_1$, $\ell_{n+1}:= \ell_1$,  and $P_n$ as the plank bounded  by $\ell_n$ and the parallel line through $p_1$. Note that in both cases, $p_n \not \in P_1$ by the minimality of $n$.

For each $i = 1, \ldots, n$, let  $\alpha_{i}$ be the angle between $\ell_{i}$ and $\ell_{i+1}$. Since $\ell_{i+1}$ supports $K$, and the arc $\widearc{p_i p_{i+1}}$ connects points on opposite sides of $P_i$, we derive that 
\begin{equation} \label{eq:arclengthestimate}
|\widearc{p_i p_{i+1}} \geq \frac{\delta}{\sin{\alpha_i}}
\end{equation}
for every $i = 1, \ldots, n$ in the first case and for every $i = 1, \ldots, n-1$ in the second case (as then $P_n$ has width less than $\delta$). Yet, $p_n \not \in P_1$ implies that \eqref{eq:arclengthestimate} holds for $i = n$ in this case as well.

\begin{figure}[h]
\centering
\begin{subfigure}{.5\textwidth}
  \centering
  \includegraphics[width=0.95\linewidth]{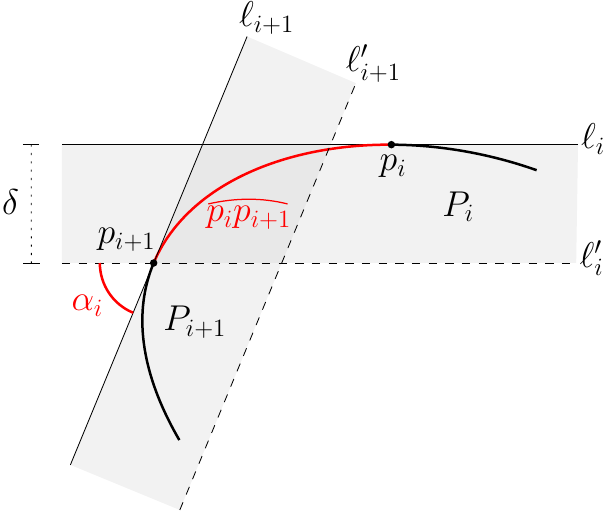}
  \caption{Two adjacent planks in a covering of $\partial K$}
  \label{good}
\end{subfigure}%
\begin{subfigure}{.5\textwidth}
  \centering
  \includegraphics[width=0.95\linewidth]{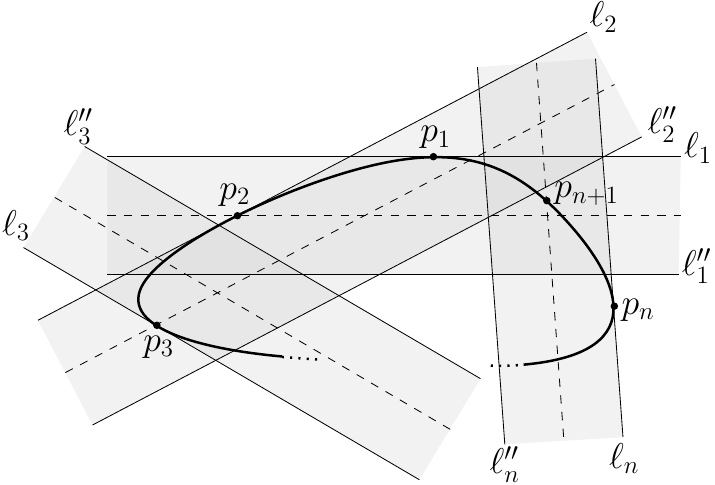}
  \caption{A covering requiring $n$ planks}
  \label{fig:bad}
\end{subfigure}
\caption{Construction of the system of planks covering $\partial K$}
\label{fig:planks}
\end{figure}

Since by construction, the arcs $\widearc{p_i p_{i+1}}$ for $i = 1, \ldots, n$ are pairwise nonoverlapping parts of $\partial K$, we derive from \eqref{eq:arclengthestimate} that 

    \begin{equation}\label{perimeter}
        \sum_{i=1}^n \frac{\delta}{\sin{\alpha_i}}\leq \rho.
    \end{equation}
Furthermore, 
    \begin{equation}\label{angles}
        \sum_{i=1}^n \alpha_i \leq 2\pi.
    \end{equation}
Since the function $\frac{\delta}{\sin{x}}$ is convex over $(0, \pi)$ and each $\alpha_i \in (0,\pi)$, Jensen's inequality~\cite{Jensen} yields that
    \begin{equation}\label{ineq_sin}
    \frac 1 n \cdot \sum_{i=1}^n \frac{\delta}{\sin(\alpha_i)} \geq  \frac{\delta}{\sin\left({\frac{\sum_{i=1}^n \alpha_i}{n}}\right)}.
    \end{equation}
Note that as $\rho \geq 2 w(K)$ and $\delta < w(K)$, we have $\sqrt{\frac{2\pi \rho}{\delta}} >3$. Thus, if $n \leq 3$, the claim is proved. Otherwise, since $n \geq 4$ and $\sin x$ is increasing on $[0, \frac \pi 2]$, \eqref{angles} shows that 
\[
\sin\left({\frac{\sum_{i=1}^n \alpha_i}{n}}\right) \leq  \sin \frac {2 \pi}{n}.
\]
    
Thus, \eqref{perimeter} and \eqref{ineq_sin} imply that
    $$\frac{\delta}{\sin\frac{2\pi}{n}}\leq \frac{\rho}{n}$$
Recall that $\sin{x} < x$ for all $x > 0$. Then a rearrangement gives $n < \sqrt{\frac{2\pi \rho}{\delta}}$.

To finish the proof, for each $i\in [n]$, consider the line $\ell_i''$ obtained by translating $\ell_i$ at distance $2\delta$  towards $K$. Define  $P_i'$ to be the plank bounded by $\ell_i$ and $\ell_i''$. The construction guarantees that $(P_i')_1^n$ covers the metric annulus $K^\delta$.
\end{proof}

\section{The covering  construction} \label{sec:proof}
\begin{proof}[Proof of Theorem \ref{spikythm}]
Once again, we assume that $K$ is in standard position. 

It suffices to show that for any $\eps \in (0,1)$, the set $K\setminus  \inte \varepsilon K$ can be covered by a finite system $\mathcal{S}$ of planks of total width strictly less than $w(K)$. 
Indeed, by continuity, this yields that for sufficiently small $\kappa >0$, the annulus $K \setminus \{\varepsilon K + \kappa e_d\}$ can be covered by a system of planks whose total width is still strictly less than $1$:  assume that $\mathcal{S}$ consists of $N$ planks, and inflate each member of $\mathcal{S}$ about its central line/plane so that the width increases by $2\kappa$. The resulting inflated planks cover $K \setminus \{\varepsilon K + \kappa e_d\}$ (note that $\mathcal{S}$ covers $\partial (\eps K)$) while the total width increases by $2 \kappa N$. Setting $\kappa$ small enough guarantees that the total width remains less than $w(K)$. 

In order to construct a covering of $K\setminus  \inte \varepsilon K$, let $t \in (0,1)$ be a parameter whose value we will fix later. The covering starts with a single plank of width $1-t$ bounded by $H_1$ and $H_t$. This covers the part of the annulus above $H_t$. Thus, it suffices to show that the part of $K\setminus  \inte \varepsilon K$ between $H_t$ and $H_0$ can be covered with a finite set of planks whose total width is strictly less than $t$. 

To this end, we will first show that when $d=2,3$, then for a suitable value of $t$, the $(d-1)$-dimensional annulus  $(T_K \setminus \inte \varepsilon K)\cap H_t$ can be covered by a finite set $\mathcal{P}$ of $(d-1)$-dimensional planks of total width less than~$t$. 

By convexity of $\varepsilon K$, the Hausdorff distance $\delta_t := d_H( (\partial T_K\cap H_t),(T_K\setminus \inte \varepsilon K)\cap H_t)$ is monotonically decreasing as $t\to 0$. Note that $T_K = T_{\inte \eps K}$, hence Lemma~\ref{lem-1} states that $\frac{\delta_t}{t}\to0$ as $t\to 0$. When $d=2$, choose $t$ so that $ \delta_t < t/2$. If $d = 3$, then let $t$ such that $\delta_t < \frac{t}{8\pi \rho}$ where $\rho$ is the perimeter of $T_K \cap H_1$. 

\begin{figure}[h]
    \centering
    \includegraphics[width=0.6\linewidth]{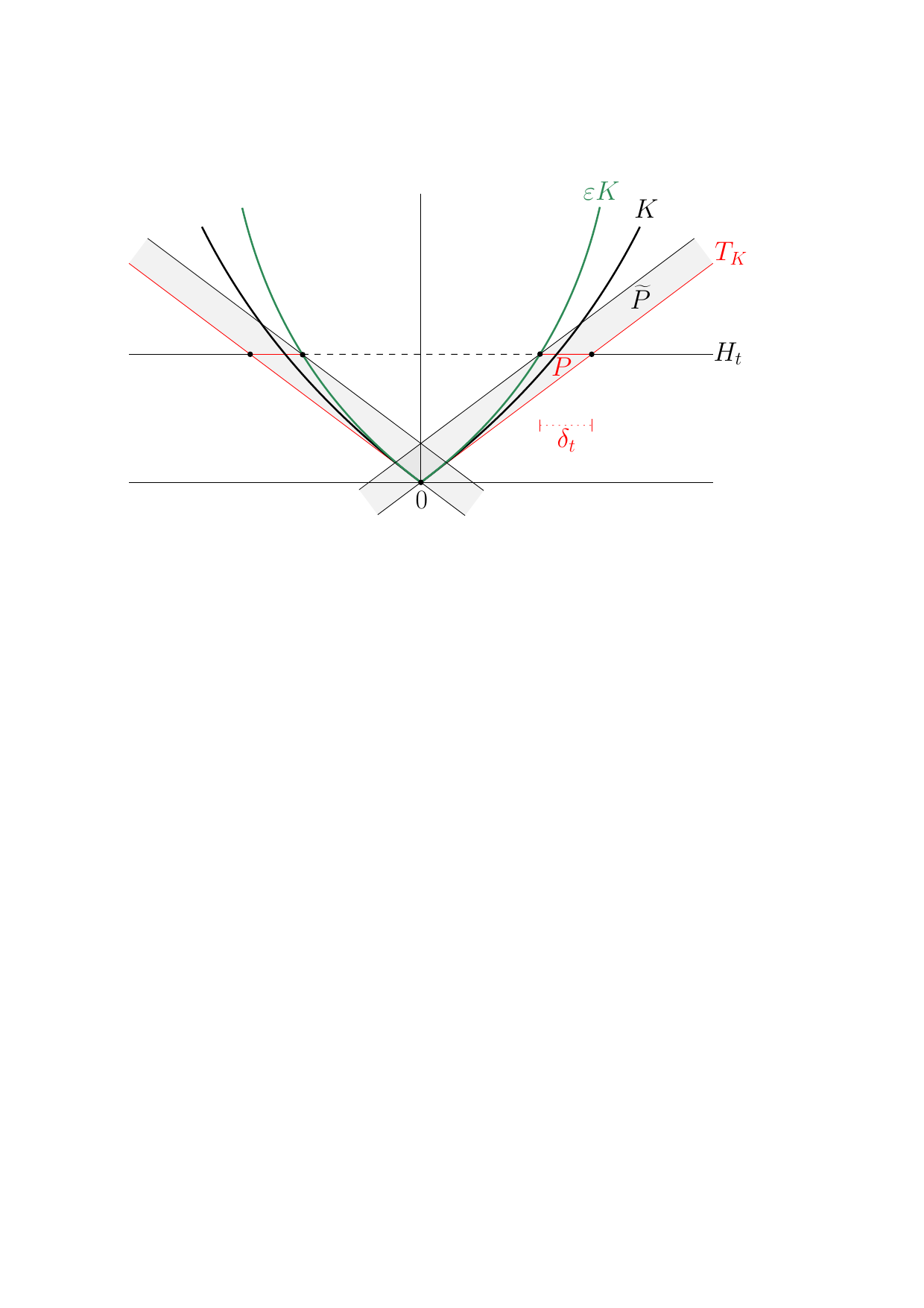}
    \caption{A covering strategy of a $2$-dimensional $K$}
    \label{fig:covering2D}
\end{figure}
\begin{figure}[h]
    \centering
    \includegraphics[width=0.6\linewidth]{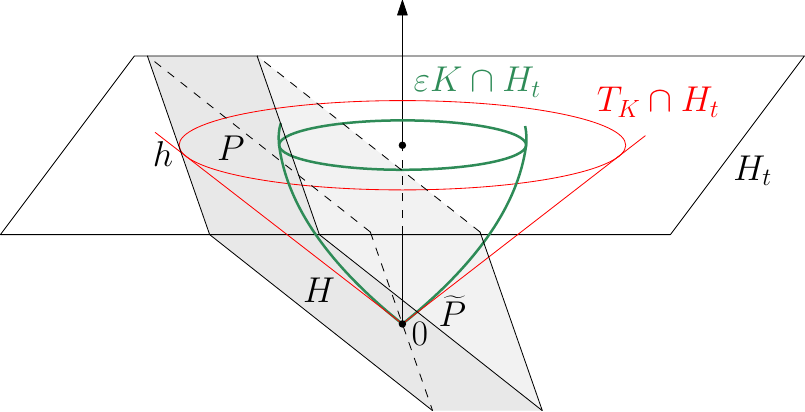}
    \caption{The covering construction $K\subset \R^3$}
    \label{fig:covering3D}
\end{figure}

By the definition of $\delta_t$,  covering the metric annulus $(T_K\cap H_t)^{\delta_t}$ also covers $(T_K \setminus \inte \varepsilon K)\cap H_t$. When $d=2$, then  $(T_K\cap H_t)^{\delta_t}$ can clearly be covered by two planks of width $\delta_t$, resulting in total width $2 \delta_1 < t$, hence this is a suitable choice for $\mathcal{P}$. For $d = 3$, note that the perimeter of $T_K \cap H_t$ is $t\rho$.  Hence, by Lemma \ref{lem-2}, the metric annulus $(T_K\cap H_t)^{\delta_t}$  can be covered by a system of planks whose total width is $\sqrt{8\pi (t\rho) \delta_t} < t$. Thus, setting $\mathcal{P}$ as this set works. 

Finally, we construct a set of $d$-dimensional planks from $\mathcal{P}$ that cover the lower part of $T_K\setminus  \inte \varepsilon K$. For each $P \in \mathcal{P}$, let $h$ be its bounding $(d-2)$-dimensional flat which is tangent to $T_K$, define $H$ to be the hyperplane (i.e. line/plane) through $h$ and $\0$, and consider the $d$-dimensional plank $\widetilde{P}$ which is bounded by $H$ such that $\widetilde{P} \cap H_t = P$ (see Figure~\ref{fig:covering2D} and Figure~\ref{fig:covering3D}). Note that $H$ is tangent to $T_K$ and that the width of $\widetilde{P}$ is at most that of $P$. Moreover, convexity of $\varepsilon K$ implies that the system $\widetilde{\mathcal P}$  of planks obtained from members of $\mathcal{P}$ covers $T_K\setminus  \inte \varepsilon K$, and thus $K\setminus  \inte \varepsilon K$ as well. As the total width of   $\widetilde{\mathcal P}$ is less than $t$, the proof is complete. 
\end{proof}

\section{Further Remarks}
The generalization of the second case of Theorem~\ref{ww} remains to be investigated, namely, whether the absence of a minimal width direction in which $K$ is spiky means that the removal of a sufficiently small homothetic copy of $K$ gives an annulus that still needs planks of total width at least the minimal width of $K$ to cover it.

Our current construction fails for general spiky convex bodies in $\mathbb{R}^d$. Although Lemma \ref{lem-1} is applicable for all $d\in \mathbb{N}$, Lemma \ref{lem-2} fails for $d \geq 3$, as in these cases, covering the unit sphere requires as much total width as covering the unit ball. Therefore, the construction used to prove Theorem~\ref{spikythm} does not generate an economic plank covering. 

However, our result does extend to higher dimensions in the special case where $T_K$ is polyhedral, i.e. it is the intersection of finitely many half-spaces that contain $\mathbf{0}$ on their boundaries.

\begin{proposition}
If $K \in \mathcal{K}^d$, $d\ge 2$, has a minimal width direction in which it is spiky and $T_K$ is polyhedral, then for all $\varepsilon \in (0, 1)$ there exists $y\in \mathbb{R}^d$ with $\varepsilon K + y\subset \inte K$ such that $K\setminus \{\varepsilon K + y\}$ can be covered by planks of total width strictly less than $w(K)$.
\end{proposition}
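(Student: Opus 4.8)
The plan is to mimic the proof of Theorem~\ref{spikythm} almost verbatim, replacing the two-step ``slice at height $t$ and cover the planar annulus'' argument by a direct covering argument that exploits the polyhedral structure of $T_K$. First, I would put $K$ in standard position and reduce, exactly as in the proof of Theorem~\ref{spikythm}, to showing that $K \setminus \inte \eps K$ can be covered by finitely many planks of total width strictly less than $w(K) = 1$; the continuity/inflation argument that converts this into a covering of $K \setminus \{\eps K + \kappa e_d\}$ for small $\kappa$ carries over unchanged. As before, fix a parameter $t \in (0,1)$, use one plank of width $1-t$ bounded by $H_1$ and $H_t$ to cover the part above $H_t$, and reduce to covering the slab-bounded piece of $K \setminus \inte \eps K$ between $H_0$ and $H_t$ by planks of total width strictly less than $t$.

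The key new ingredient is that, since $T_K$ is polyhedral, $T_K = \bigcap_{j=1}^m \{x : \langle x, v_j\rangle \le 0\}$ for finitely many unit vectors $v_j$ with $\0 \in H(v_j,0)$; hence $\partial T_K$ is contained in the union of the $m$ hyperplanes $F_j := H(v_j,0)$, all of which pass through $\0$. I would argue that for each $j$ there is a $d$-dimensional plank $\widetilde P_j$, bounded on one side by $F_j$, whose width is small and which, together with the others, covers $(T_K \setminus \inte \eps K)\cap \{0 \le \langle e_d, x\rangle \le t\}$. Concretely, by Lemma~\ref{lem-1} applied to $T_K = T_{\inte \eps K}$, the rescaled metric annulus $\frac1t\big((T_K \setminus \inte \eps K)\cap H_t\big)$ has Hausdorff distance to $\partial T_K \cap H_1$ tending to $0$ as $t \to 0$; equivalently $\delta_t := d_H\big((\partial T_K \cap H_t),(T_K\setminus \inte\eps K)\cap H_t\big)$ satisfies $\delta_t/t \to 0$. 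Choose $t$ so small that $\delta_t < t/(2m)$. Then each facet hyperplane $F_j$, translated inward (towards $T_K$) by $2\delta_t$, bounds a plank $Q_j$ of width $2\delta_t$; the union $\bigcup_j Q_j$ contains the metric annulus $(T_K \cap H_t)^{\delta_t}$ intersected with the slab, because every point of $\partial T_K$ lies within distance $0$ of some $F_j$, so every point within distance $\delta_t$ of $\partial(T_K\cap H_t)$ and inside $T_K$ lies within distance $2\delta_t$ of some $F_j$ on the correct side. By the definition of $\delta_t$ this union covers $(T_K \setminus \inte \eps K)\cap H_t$, and by convexity of $\eps K$ (the same cone-tangency argument as in the proof of Theorem~\ref{spikythm}, using that the $F_j$ pass through $\0$) the corresponding $d$-dimensional planks $\widetilde{Q}_j$ — each bounded by $F_j$ and having width at most $2\delta_t$ — cover all of $T_K \setminus \inte \eps K$ in the slab $0 \le \langle e_d,x\rangle \le t$, hence cover $K \setminus \inte \eps K$ there as well. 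Their total width is at most $m \cdot 2\delta_t < t$, completing the covering.

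The main obstacle, and the point requiring the most care, is the geometric claim that the inward translates of the finitely many facet hyperplanes of $T_K$ genuinely cover the thin metric annulus of $T_K \cap H_t$, with a width bound that is linear in $\delta_t$ and depends only on $m$ (not on the facet angles). This is where polyhedrality is essential: for a smooth cone one would need infinitely many directions and no uniform width bound, which is exactly the failure noted in the Further Remarks for $d \ge 3$. One must check that a point $x \in T_K \cap H_t$ with $\dist(x, \partial(T_K\cap H_t)) \le \delta_t$ lies within Euclidean distance $C(m)\delta_t$ of the active facet hyperplane; a slightly delicate case is a point near a lower-dimensional face where several facets meet, but there the inward translates of those facets still cover a neighborhood of the face, and a compactness/finiteness argument over the $m$ facets gives a uniform constant. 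Replacing $2\delta_t$ by $C(m)\delta_t$ and choosing $t$ with $\delta_t < t/(C(m)m)$ then makes the bookkeeping go through; everything else is a routine transcription of the $d=2,3$ argument.
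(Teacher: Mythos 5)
Your proposal is correct and is essentially the paper's proof, just phrased directly in $d$ dimensions: the paper first covers the $(d-1)$-dimensional annulus $(T_K\setminus\inte\varepsilon K)\cap H_t$ by $N$ planks of width $\delta_t$ parallel to the facets of $T_K\cap H_t$, then lifts them via the generic ``hyperplane through $h$ and $\0$'' construction, whereas you build the $d$-dimensional planks $Q_j$ bounded by the facet hyperplanes $F_j\ni\0$ at once — but since those $F_j$ are exactly the hyperplanes the lift produces, the two constructions coincide. One small overstatement: the ``delicate case near a lower-dimensional face'' you flag, and the resulting constant $C(m)$, are not needed; if $x\in T_K\cap H_t$ lies within $\delta_t$ of a point $y\in H_t\setminus T_K$, then $\langle y,v_j\rangle>0$ for some facet normal $v_j$ while $\langle x,v_j\rangle\le 0$, so $0\le -\langle x,v_j\rangle<\langle y-x,v_j\rangle\le\delta_t$, giving distance $<\delta_t$ to $F_j$ with constant exactly $1$, matching the paper's choice $\delta_t<t/N$ and $N$ planks of width $\delta_t$.
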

The proof is a simple modification of the proof of Theorem \ref{spikythm}.
\begin{proof}
The construction is identical to the one utilized in Theorem \ref{spikythm} with the only difference that $\mathcal{P}$ is now a set of planks that are parallel to the facets of $T_K\cap H_t$. Assuming that $T_K\cap H_t$ has $N$ facets, the defining inequality for $t$ is $\delta_t < \frac{t}{N}$. Then the metric annulus $(T_K\cap H_t)^{\delta_t}$  can be covered by $N$ planks of width $\delta_t$, which have total width less than $t$. The rest of the proof is unchanged. 
\end{proof}

\section{Acknowledgements}
We are grateful to the anonymous referee for suggestions that helped to improve the presentation. 
This research was done under the auspices of the Budapest Semesters in Mathematics program. 

\bibliographystyle{abbrv}
\bibliography{Plank_reference}

\bigskip
\noindent
{\sc Gergely Ambrus}
\smallskip

\noindent
{\em Bolyai Institute, University of Szeged, Hungary, \\ and HUN-REN Alfréd Rényi Institute of Mathematics,  Budapest, Hungary}

\smallskip
\noindent
 ORCID: 0000-0003-1246-6601
 
 \smallskip
\noindent
e-mail address: \texttt{ambrus@server.math.u-szeged.hu, ambrus@renyi.hu}

\medskip

\noindent
{\sc Julian Huddell}
\smallskip

\noindent
{\em Tulane University, New Orleans, Louisiana, United States}

\noindent
e-mail address: \texttt{jhuddell@tulane.edu}

\medskip
\noindent
{\sc Maggie Lai}
\smallskip

\noindent
{\em Tulane University, New Orleans, Louisiana, United States}

\noindent
e-mail address: \texttt{mlai2@tulane.edu}

\medskip

\noindent
{\sc Matthew Quirk}
\smallskip

\noindent
{\em Kalamazoo College, Kalamazoo, Michigan, United States}

\noindent
e-mail address: \texttt{Matthew.Quirk22@kzoo.edu}
\medskip

\noindent
{\sc Elias Williams}
\smallskip

\noindent
{\em Lewis \& Clark College, Portland, Oregon, United States }

\noindent
e-mail address: \texttt{roark@lclark.edu}

\end{document}